\documentclass[10pt]{amsart}
\usepackage{amsmath}
\usepackage{amsfonts}
\usepackage{amssymb}
\usepackage{latexsym}
\usepackage{amsthm}
\makeatletter
\@namedef{subjclassname@2010}{%
  \textup{2010} Mathematics Subject Classification}
\makeatother
\usepackage{setspace}

\usepackage{hyperref}
\usepackage{xcolor}

\newtheorem*{hypothesis a}{Hypothesis A}

\everymath{\displaystyle}

\newtheorem{thm}{Theorem}[section]

\newtheorem{conj}[thm]{Conjecture}

\newtheorem{example}[thm]{Example}

\theoremstyle{definition}
\newtheorem{df}[thm]{Definition}

\newtheorem{rmk}[thm]{Remark}

\numberwithin{equation}{section}

\newcommand{\Norm}{\mbox{\normalfont \bf N}}

\newcommand{\rb}{\overline\rho}
\newcommand{\Q}{\mathbb{Q}}
\newcommand{\C}{\mathbb{C}}
\newcommand{\Z}{\mathbb{Z}}

\newcommand{\e}{\epsilon}

\newcommand{\fm}[4]{\begin{pmatrix} #1 & #2 \\ #3 & #4 \end{pmatrix}}

\begin{document}
\renewcommand*\arraystretch{2}
 \title{A characterization of ordinary modular eigenforms with CM}
\author{Rajender Adibhatla}
\address{ Department of Mathematics, University of Luxembourg,
                          Campus Kirchberg, 6 rue Richard Coudenhove-Kalergi, L-1359 Luxembourg }
                     \email{rajender.adibhatla@uni.lu}

\author{Panagiotis Tsaknias}
\address{Department of Mathematics, University of Luxembourg,
                          Campus Kirchberg, 6 rue Richard Coudenhove-Kalergi, L-1359 Luxembourg}
                          \email{panagiotis.tsaknias@uni.lu}

\date{July 2013}
\subjclass[2010]{11F33, 11F80}

\begin{abstract}
 For a rational prime $p \geq 3$ we show that a $p$-ordinary modular eigenform $f$ of weight $k\geq 2$, with $p$-adic Galois
representation $\rho_f$, mod ${p^m}$ reductions
$\rho_{f,m}$,  and with complex multiplication (CM), is characterized by the existence of $p$-ordinary CM companion forms $h_m$ modulo $p^m$  for all integers $m \geq 1$ in the sense that $\rho_{f,m}\sim \rho_{h_m,m}\otimes\chi^{k-1}$,  where $\chi$ is the $p$-adic cyclotomic character.  \end{abstract}

\maketitle

\section{Introduction}
For a rational prime $p \geq 3$ let $f$ be a primitive, cusp form with $q$-expansion $\sum a_n(f)q^n$ and associated $p$-adic Galois representation $\rho_f$. In this paper, we prove some interesting arithmetic properties of such a form which, in addition, has complex multiplication (CM). One of the reasons that CM forms have historically been an important subclass of modular forms is  the simplicity with which they can be expressed. They arise from algebraic Hecke characters of imaginary quadratic fields and this makes them ideal initial candidates on which one can  check deep conjectures in the theory of modular forms. 
The specific arithmetic property we establish involves  higher congruence companion forms which were introduced in \cite{Raja1}. The precise definition and properties of these forms are given in Section $2$, but for now we will only remark that companion forms mod $p^m$ are defined as natural analogs of the classical (mod $p$) companion forms of Serre and Gross \cite{Gross}.  

The main theorem of this work (Theorem 4.1, stated and proved in Section $4$) establishes that given a $p$-ordinary CM form $f$, one can always find a CM companion form mod $p^m$ for any integer $m \geq 1$. The proof explicitly finds the desired companion forms in a Hida family of CM forms, thereby circumventing the deformation theory and modularity lifting approach of the companion form theorem in \cite{Raja1}. As an application of the main theorem we show that the converse is true as well: Any $p$-ordinary form $f$ which has CM companions mod $p^m$ for each $m \geq 1$ must necessarily have CM.  We therefore have a complete arithmetical  characterization of $p$-ordinary CM forms. In Section $5$ we present a different proof of the main theorem. It is elementary in the sense that we work directly with the Hecke characters and avoid the heavy machinery of Hida families and overconvergent modular forms. Moreover, the result in this section is, in fact,  a strengthening of the main theorem because it shows the existence of companion forms for odd composite moduli $M$ and not just modulo $p^m$.  On the other hand, the argument appears to work only under the hypothesis  that the class number of the imaginary quadratic extension from which $f$ arises is coprime to $M$.
  
\section{Higher companion forms}
Let $p \geq 3$ be a rational prime and let $f$ be a primitive  cusp form of weight $k \geq 2 $, level $N$ prime to $p$, Nebentypus $\Psi$  and $q$-expansion $\sum a_n(f)q^n$ .  Here, $f$ is primitive in the sense that it is a normalised newform that is a common eigenform for all the Hecke operators. For a place $\mathfrak{p}|p$ in $K_f$, the number field  generated by the $a_n(f)$'s, let $\rho_f: G_\Q \longrightarrow GL_2(K_{f,\mathfrak{p}})$ be a continuous, odd, irreducible Galois representation that can be attached to $f$. We may, after conjugation, assume that $\rho_f$ takes values in the ring of integers of some finite extension of $\Q_p$. This allows us to consider reductions of $\rho_f$ modulo $p^m$ (for integers $m \geq 1$) which, with the exception of the mod $p$ reduction $\rb$, we will denote by $\rho_{f,m}$. 
We can, and will,  define congruences and reduction mod $p^m$ even when the elements don't lie in $\mathbb{Z}_p$ by using the notion of congruence due to Wiese and Ventosa \cite{Wiese-Ventosa}.
With $f$ as above, $p$-adic cyclotomic character $\chi$ and  $2 \leq k \leq p^{m-1}(p-1)+1$, we define  a companion form of $f$ to be as follows.

\begin{df}\label{compdef} Let  $k_m \geq 2$ be the smallest integer such that $k_m+k-2 \equiv 0$ mod $p^{m-1}(p-1)$. Then,  
 a \textbf{companion form} $g$ of $f$, modulo $p^m$, is a  $p$-ordinary normalised eigenform of weight $k_m$ such that $\rho_{f,m} \simeq \rho_{g,m}\otimes\chi^{k-1} \mod{p^m}$. An equivalent formulation of the above criterion in terms of the Fourier expansions is:  $a_n(f) \equiv n^{k-1}a_n(g)$ mod $p^{m-1}(p-1)$ for  $(n,Np) = 1$.
\end{df}

\begin{rmk}
The equivalence between the Galois side and the coefficient side in the above definition perhaps needs further justification. One direction is immediate if we take the traces of Frobenii of the Galois representations. The other direction follows from the absolute irreducibility of  $\rb$, Chebotarev and \cite[Th\'{e}or\`{e}me 1]{Car94} which is essentially a generalization of the Brauer-Nesbitt theorem to arbitrary local rings. \end{rmk}






Following Wiles \cite{W1}, we say that $f$ is \textbf{ordinary} at $p$ (or simply \textbf{$p$-ordinary}) if $a_p(f) \not\equiv 0$  mod $\mathfrak{p}$ for each prime $\mathfrak{p}|p$ (in $K_f$).  Then, by Wiles \cite{W1} and Mazur-Wiles \cite{MW}, we have
 \[
 \rho_f|_{G_p}\sim \fm{\chi^{k-1}\lambda(\Psi(p)/a_p)}{*}{0}{\lambda(a_p)}
 \]
  where
 $G_p$ is a decomposition group at $p$ and, for any $\alpha$ in $(\mathcal{O}_{K_f})^{\times}$, $\lambda(\alpha)$ is the unramified character taking $Frob_p$ to $\alpha$.

If $\rho_{f}$ mod $p$ is absolutely irreducible,  then we note that the reductions $\rho_{f,m}|_{G_p}=\rho_f|_{G_p}$ mod ${p^m}$ for $m \geq 1$ are  independent of the  choice of 
lattice used to define $\rho_f$. A natural question is to ask when  the restriction $\rho_{f,m}|_{G_p}$ actually splits. It is an easy check that  a sufficient condition for  $\rho_{f,m}$ to split at $p$ is that $f$ has a $p$-ordinary companion form modulo $p^m$. 
 
That the  splitting at $p$ of $\rho_{f,m}$ implies the existence of a companion form mod $p^m$ is considerably more difficult to prove. This was shown in \cite{Raja1} but only under the hypothesis that $\rb_f$ has full image. 
However, when $f$ has CM, Im($\rb_f$) is necessarily projectively dihedral.  This renders the method of \textit{loc. cit.} inapplicable even though, \textit{a priori}, we know that $\rho_f|_{G_p}$ splits and therefore expect that $f$ should have a companion mod $p^m$ for all $m \geq 1$.
 In the sequel we avoid the use of lifting theorems altogether and exhibit the companion forms by working directly with the  Hecke character associated to the CM form and visualising them as part of a Hida family. 

\section{Hecke characters and CM forms}

In this section we briefly describe the well-known connection between forms with complex multiplication (CM forms) and Hecke characters of imaginary quadratic fields.  The interested reader may consult \cite[\S 3]{Rib} and \cite[Chapter VII, \S 3]{Neukirch}.

Let $K$ be a number field, $\mathcal{O}=\mathcal{O}_K$ its ring of integers and $\mathfrak{m}$ an ideal of $K$. We denote by $J^\mathfrak{m}$ the group of fractional ideals of $\mathcal{O}_K$ that are coprime to $\mathfrak{m}$. 

\begin{df}  A Hecke character $\psi$ of $K$, of modulus $\mathfrak{m}$, is a group homomorphism $\psi:J^\mathfrak{m}\to \C^*$ such that there exists a character $\psi^\infty:(\mathcal{O}_K/\mathfrak{m})^*\to\C^*$ and a group homomorphism $\psi_\infty:K_\mathbb{R}^*\to\C^*$, where $K_\mathbb{R}:=K\otimes_\Q\mathbb{R}$, such that:
\begin{equation}\label{eq:comp}
\psi((\alpha))=\psi^\infty(\alpha)\psi_\infty(\alpha)\qquad\textrm{for all }\alpha\in K.
\end{equation}
\end{df}
We refer to $\psi^\infty$ and $\psi_\infty$ as the finite and the infinity type of $\psi$ respectively. The conductor of $\psi$ is the conductor of $\psi^\infty$ and we will call $\psi$ primitive if its conductor and modulus are equal.
We now specialize to the imaginary quadratic field case. Let $K=\Q(\sqrt{-d})$, where $d$ is a squarefree positive integer. We will denote by $D$ its discriminant and by $(D|.)$  the Kronecker symbol associated to it. The only possible group homomorphisms $\psi_\infty$ are of the form $\sigma^u$, where $\sigma$ is one of the two conjugate complex embeddings of $K$ and $u \geq 0$ is an integer. The following well known theorem \cite[Theorem 4.8.2]{Miyake} due to Hecke and Shimura then associates a modular form to the Hecke character $\psi$ of $K$.
\begin{thm}\label{thm:Miyake} 
Given a Hecke character $\psi$ of infinity type $\sigma^u$ and  finite type $\psi^{\infty}$ with modulus $\mathfrak{m}$,  assume $u>0$ and let $\Norm\mathfrak{m}$ be the norm of $\mathfrak{m}$. Then, 
$f=\sum_n(\sum_{\mathfrak{\Norm a}=n}\psi(\mathfrak{a}))q^n$
is a cuspidal eigenform in $S_{u+1}(N,\epsilon)$, where $N=|D|\Norm\mathfrak{m}$ and $\epsilon(m)=(D|m)\psi^{\infty}(m)$ for all integers $m$.
\end{thm}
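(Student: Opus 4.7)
The plan is to follow the classical strategy: realize $f$ as a finite sum of theta series indexed by the ideal classes modulo $\mathfrak{m}$, and then read off its modularity, cuspidality, and Hecke eigenform property from the transformation laws of those theta series and from the Euler factorization of $L(s,\psi)$.

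First I would group the ideals of a fixed class by choosing an integral representative $\mathfrak{b}$ coprime to $\mathfrak{m}$ and writing $\mathfrak{a}\mathfrak{b}=(\alpha)$ with $\alpha\in\mathfrak{b}$. The inner sum defining the $n$-th Fourier coefficient then rearranges, over one class, into
$$\psi(\mathfrak{b})^{-1}\sum_{\alpha}\psi^{\infty}(\alpha)\sigma(\alpha)^{u}\,q^{\Norm(\alpha)/\Norm(\mathfrak{b})},$$
where $\alpha$ ranges over nonzero elements of $\mathfrak{b}$ modulo units and subject to a residue condition mod $\mathfrak{m}$ (fixed by the class). Because $\sigma(\alpha)^{u}$ is a harmonic polynomial of degree $u$ on $K_{\mathbb{R}}\cong\mathbb{C}$, each such series is a holomorphic theta series of weight $u+1$ attached to the lattice $\mathfrak{b}\subset\mathbb{C}$, twisted by a spherical weight and a congruence condition.

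Second, Poisson summation on the lattice $\mathfrak{b}$, together with the standard weight-$u+1$ theta machinery, yields the transformation law under $\Gamma_0(N)$. The factor $\sigma(\alpha)^{u}$ supplies the automorphy factor $(cz+d)^{u+1}$; the covolume of $\mathfrak{b}$ contributes the discriminant $|D|$ to the level; and the congruence $\alpha\equiv\alpha_{0}\pmod{\mathfrak{m}}$, upon dualising, contributes the factor $\Norm\mathfrak{m}$ and the nebentypus $\epsilon(m)=(D\mid m)\psi^{\infty}(m)$ through the quadratic Gauss sum of $K$ and the character $\psi^{\infty}$. Summing over the finite class group gives $f\in M_{u+1}(\Gamma_{0}(|D|\Norm\mathfrak{m}),\epsilon)$. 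Cuspidality then follows because $u>0$: the constant term of each theta series at any cusp is an averaged value of a nontrivial harmonic polynomial on a shifted lattice, hence zero.

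Finally, the Hecke eigenform statement is formal from the multiplicativity of $\psi$ on $J^{\mathfrak{m}}$: one has the Euler product
$$L(s,\psi)=\sum_{\mathfrak{a}}\psi(\mathfrak{a})\Norm(\mathfrak{a})^{-s}=\prod_{\mathfrak{q}\nmid\mathfrak{m}}\bigl(1-\psi(\mathfrak{q})\Norm(\mathfrak{q})^{-s}\bigr)^{-1},$$
and grouping the primes of $K$ above each rational prime $\ell$ gives an Euler product of the shape $\prod_{\ell}(1-a_{\ell}\ell^{-s}+\epsilon(\ell)\ell^{u-2s})^{-1}$, which is precisely the shape of the Hecke $L$-series of a normalised eigenform of weight $u+1$ and nebentypus $\epsilon$. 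The main obstacle is the bookkeeping in the second step: to land exactly on level $|D|\Norm\mathfrak{m}$ and nebentypus $(D\mid\cdot)\psi^{\infty}(\cdot)$ one must carefully track the interaction of the different $\mathfrak{d}_{K}$, the modulus $\mathfrak{m}$, and the infinity type under Poisson summation. Everything else is either formal or a standard theta computation, which is why we are content in the sequel to cite \cite{Miyake}.
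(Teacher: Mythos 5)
Your sketch is correct in outline, but note that the paper itself offers no proof of this statement: it is quoted as a known theorem of Hecke--Shimura with a citation to \cite[Theorem 4.8.2]{Miyake}, and the proof there proceeds via Weil's converse theorem (one checks analytic continuation, boundedness in vertical strips and the functional equations of the twisted $L$-series $L(s,\psi\chi)$ for sufficiently many Dirichlet characters $\chi$, and the level $|D|\Norm\mathfrak{m}$ and nebentypus $(D|\cdot)\psi^{\infty}(\cdot)$ drop out of the functional-equation data), whereas you take the older, more constructive route through theta series with spherical coefficients and Poisson summation. Both are standard and both work: the converse-theorem argument outsources the delicate level/nebentypus bookkeeping to the $\epsilon$-factors of Hecke $L$-functions, while your theta approach exhibits $f$ explicitly as a finite sum of lattice theta series, which is arguably more illuminating but leaves you with exactly the transformation-law bookkeeping you acknowledge deferring. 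Two small points to tighten: summing over $\alpha\in\mathfrak{b}$ ``modulo units'' is only well defined because $\psi^{\infty}(\varepsilon)\sigma(\varepsilon)^{u}=1$ for units $\varepsilon$, which is the compatibility (3.1) applied to $(\varepsilon\alpha)=(\alpha)$ and should be said; and the cuspidality argument is not that an ``averaged value of a nontrivial harmonic polynomial on a shifted lattice'' vanishes, but rather that the constant term at any cusp receives contributions only from vectors of norm zero in the relevant (shifted, dual) lattice --- by positive definiteness of the norm form this means $\alpha=0$, where the degree-$u$ polynomial vanishes since $u>0$. With those repairs, and the standard fact that a normalised form whose coefficients satisfy the Hecke recursions encoded in your degree-two Euler product is an eigenform for all $T_{\ell}$ with $\ell\nmid N$, your argument is a complete and legitimate alternative to the cited proof.
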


The eigenform associated with a Hecke character $\psi$ is new if and only if $\psi$ is primitive. (See Remark 3.5 in \cite{Rib}.) An important feature of forms which arise in this way is that they coincide with CM forms whose definition we recall.

\begin{df} A  newform $f$   is said
 to have \textbf{complex multiplication}, or just CM, by a
quadratic character $\phi:G_\Q\longrightarrow \{\pm 1\}$ if
$a_q(f)=\phi(q)a_q(f)$  for almost all primes
$q$.
We  will also refer to CM by the
corresponding quadratic extension.
\end{df}

It is clear that the cusp form $f$ in the above theorem has CM by $(D|.)$. Indeed, the coefficient  $a_q(f)$ in the Fourier expansion of $f$ is $0$ if no ideal of $K$ has norm equal to $q$. Since $(D|q) =-1$ exactly when this holds for $q$, $a_q(f) =(D|q)a_q(f)$, and $f$ has CM. On the other hand, Ribet \cite{Rib} shows that if $f$ has CM by an imaginary quadratic field $K$ then it is induced from a Hecke character on $K$.

\section{The main theorem}
We first describe the setting in which we prove our main result.
Let $M_k^\dagger(N)$ denote the space of overconvergent modular forms of level $N$ and weight $k$ in the sense of Coleman \cite{Coleman} and let $M^\dagger(N)$  be the graded ring of overconvergent modular forms of level $N$. One defines an operator
$$\theta:=q\frac{d}{dq}:M^\dagger(N)\to M^\dagger(N), \sum a_nq^n\mapsto \sum na_nq^n.$$
Note in particular that if $f$ has (finite) slope $\alpha$ then $\theta(f)$ has slope $\alpha+1$. Furthermore, by \cite[Proposition 4.3]{Coleman}, 
$$\theta^{k-1}:M_{2-k}^\dagger(N)\to M_k^\dagger(N),$$
and classical CM forms of slope $k-1$ lie in this image (\cite[Proposition 7.1]{Coleman}). We exploit this property of CM forms to prove our main theorem which is the following.

\begin{thm}\label{thm:straight}
Let  $p \geq$ be a rational prime and $f = \sum a_n(f)q^n$  be a $p$-ordinary  primitive CM cuspidal eigenform of weight $k \geq 2$ and level $N$ prime to $p$. Then for every integer $m\geq 1$, there exists a $p$-ordinary CM eigenform $h_m = \sum a_n(h_m)q^n$ of weight  $k_m$ that is  a companion form for $f$ mod $p^m$. \end{thm}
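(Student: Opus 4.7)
The plan is to exhibit $h_m$ as a classical specialization of a $p$-adic family of CM eigenforms that interpolates $f$ via Coleman's theta operator. First I would use that $f$ is $p$-ordinary with CM by an imaginary quadratic field $K=\Q(\sqrt{-d})$: this forces $p$ to split in $K$, write $p\mathcal{O}_K=\mathfrak{p}\bar{\mathfrak{p}}$, and Theorem \ref{thm:Miyake} combined with its converse due to Ribet realizes $f$ as the theta series of a primitive Hecke character $\psi$ of $K$ of infinity type $\sigma^{k-1}$ and some conductor $\mathfrak{m}$ prime to $p$.

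Next I would pass to the critical-slope (slope $k-1$) $p$-stabilization $f^{\mathrm{crit}}$ of $f$. Coleman's Proposition 7.1, quoted just before the theorem, places $f^{\mathrm{crit}}$ in the image of $\theta^{k-1}$, producing an overconvergent form $g$ of weight $2-k$ with $\theta^{k-1}(g)=f^{\mathrm{crit}}$. Comparing $q$-expansions yields $a_n(g)=n^{1-k}a_n(f)$ for $(n,p)=1$, and since $\theta$ raises slope by one, $g$ has slope $0$: it is $p$-ordinary as an overconvergent form. On the Hecke character side, $g$ should be thought of as the $p$-adic object attached to $\psi\cdot N_{K/\Q}^{1-k}$.

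Now I would interpolate $g$ in a Hida family $\mathcal{G}$ of ordinary $p$-adic modular forms. Because $g$ is inherited from a Hecke character of $K$, this family is a CM Hida family: its $p$-adic members correspond to continuous twists of $\psi\cdot N_{K/\Q}^{1-k}$ by characters of $\mathfrak{p}$-power conductor, and at every classical weight $w\geq 2$ congruent to $2-k$ modulo $p^{m-1}(p-1)$ the specialization $\mathcal{G}_w$ is a classical $p$-ordinary CM newform coming from a Hecke character of infinity type $\sigma^{w-1}$. Setting $h_m:=\mathcal{G}_{k_m}$, Hida's control theorem delivers the congruence
\[
a_n(h_m)\equiv a_n(g)=n^{1-k}a_n(f)\pmod{p^m}\quad\text{for all }(n,Np)=1,
\]
which rearranges to $n^{k-1}a_n(h_m)\equiv a_n(f)\pmod{p^m}$ and hence, by the remark following Definition \ref{compdef}, to the companion-form relation $\rho_{f,m}\simeq \rho_{h_m,m}\otimes\chi^{k-1}$.

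The hard part will be the structural claim in the previous paragraph: verifying that the ordinary $p$-adic family through $g$ has genuinely \emph{classical} CM specializations at weight $k_m$, not merely $p$-adic ones. This rests on Hida's classicality theorem for ordinary overconvergent eigenforms of weight at least two, together with the fact that a Hida family containing a single CM eigenform is automatically a CM family parametrized by $p$-adic twists of the underlying Hecke character. Once these inputs are in hand, the proof reduces to bookkeeping identifying the integer $k_m-1$ with the infinity type $\sigma^{k_m-1}$, which is precisely controlled by the congruence $k_m\equiv 2-k\pmod{p^{m-1}(p-1)}$.
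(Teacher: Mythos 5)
Your proposal follows essentially the same route as the paper: pass to the critical-slope $p$-stabilization, invert $\theta^{k-1}$ via Coleman's Proposition 7.1 to get the ordinary weight $2-k$ CM form, place it in the CM ($p$-adic/Hida) family obtained by twisting the Hecke character, invoke Hida's classicality for ordinary weights $w\geq 2$, and read off the congruence at the weight $k_m$ specialization. The only difference is one of explicitness: where you appeal to ``Hida's control theorem'' and the general fact that a family through a CM form is CM, the paper constructs the family directly as $G(w)=\sum_{\mathfrak{a}}\bar{\psi}^{-1}(\mathfrak{a})\langle\lambda(\mathfrak{a})\rangle^{w-(2-k)}q^{\mathbf{N}\mathfrak{a}}$ with $\lambda$ an auxiliary Hecke character of conductor $\mathfrak{p}$, and proves the needed congruence by hand from $\langle\lambda\rangle\equiv 1\bmod\mathfrak{P}$.
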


\begin{proof}
Let $K=\Q(\sqrt{-d})$ be the imaginary quadratic field by which $f$ has CM, $D$ be the discriminant of $K$ and $\sigma:K\hookrightarrow \C$ one of its two complex embeddings which we fix for the rest of this section.
Let $g$ be the ($p$-old) eigenform of level $Np$ and weight $k$, whose $p$-th coefficient has $p$-adic valuation equal to $k-1$ and whose $q$-expansion agrees with that of $f$ at all primes $n$ coprime to $p$. Clearly $g$ has CM by $K$ as well. Let $\psi$ be the Hecke character over $K$ of conductor $\mathfrak{m}$ and infinity type $\sigma^{k-1}$ associated with the CM form $g$. The $p$-ordinariness of $f$ implies that $p=\mathfrak{p}\bar{\mathfrak{p}}$ is split in $K$. If $r$ is a rational prime coprime to $\mathfrak{m}$ that splits in $K$, say $r=\mathfrak{r}\bar{\mathfrak{r}}$, then we have:

$$a_r(g)=\psi(\mathfrak{r})+\psi({\bar{\mathfrak{r}}})$$

In the proof of Proposition 7.1 in \cite{Coleman} it is shown how to obtain a $p$-ordinary (CM) $p$-adic eigenform $h_{2-k}$ of weight $2-k$ such that
$$\theta^{k-1}(h_{2-k})=g.$$
 It can be easily seen that $\theta^{k-1}$ has the following effect on $q$-expansions:
$$\theta^{k-1}(\sum c_nq^n)=\sum n^{k-1}c_nq^n.$$
Therefore,
\begin{equation}
a_n(f)=n^{k-1}a_n(h_{2-k}).
\end{equation}
We will denote by $L$ the extension of $\Q_p$ generated by the coefficients of $\psi$ and $\lambda$ and by $\mathfrak{P}$ its prime ideal above $p$. We will also denote by $E$ the extension of $\Q_p$ in which $\lambda$ takes its values. 
It is clear from (4.1) that it is enough to find classical forms $h_{k_m}$ of weight $k_m$ that are congruent to $h_{2-k}\mod \mathfrak{P}^m$, where $\mathfrak{P}$ is the ideal above $p$ in $L$.
By Proposition 7.1 in \cite{Coleman} 

$$h_{2-k}=\sum_{\mathfrak{a}}\bar{\psi}^{-1}(\mathfrak{a})q^{\textbf{N}\mathfrak{a}}$$

where the sum runs over all the integral ideals of $\mathfrak{a}$ $K$ away from $\mathfrak{m}$. Ghate in \cite[pp 234-236]{Ghate}, following Hida, shows how to construct a $p$-adic CM family admitting a specific CM form as a specialization. We outline the construction.
Let $\lambda$ be  a Hecke character of conductor $\mathfrak{p}$ and infinity type $\sigma$. We have that $\mathcal{O}_E^\times\cong\mu_E\times W_E$, where $W_E$ is  the pro-$p$ part of $\mathcal{O}_E^\times$. Let $\langle\rangle$ denote the projection from $\mathcal{O}_E^\times$ to $W_E$. One then one gets (part of) the family mentioned above by
$$G(w):=\sum_{\mathfrak{a}}\bar{\psi}^{-1}(\mathfrak{a})\langle\lambda(\mathfrak{a})\rangle^{w-(2-k)}q^{\textbf{N}\mathfrak{a}}.$$
For any integer $w\geq 2$, $\psi_w(\mathfrak{a})=\bar{\psi}^{-1}(\mathfrak{a})\langle\lambda(\mathfrak{a})\rangle^{w-(2-k)}$ defines a Hecke character of infinity type $w-1$, so that by Theorem 3.1, $G(w)$ is a $p$-adic CM eigenform of weight $w$. Moreover all of them are $p$-ordinary (\cite[pp 236]{Ghate}) and therefore classical for weight $w\geq 2$ (\cite[Theorem I]{Hida}). Clearly $G(2-k)=h_{2-k}$. Notice also that all the $\psi_w$ have coefficients in $L$. 
Let $k_E,k_L$ be the residue fields of $E$ and $L$ respectively and consider the composition $\mathcal{O}_E^\times\to k_E^\times \to k_L^\times$, where the first map is the obvious surjection and the second one is the obvious injection. The image has prime-to-$p$ order so the kernel contains $W_E$. In particular $\langle\lambda\rangle\equiv 1\mod \mathfrak{P}$. This implies:
$$\langle\lambda(\mathfrak{a})\rangle^{w-w'}\equiv 1 \mod\mathfrak{P}\qquad\textrm{for all }w,w'\in\Z_p.$$
It then follows easily that if $w\equiv w'\mod p^{m-1}(p-1)$ then,
$$\langle\lambda(\mathfrak{a})\rangle^{w-w'}\equiv 1 \mod\mathfrak{P}^{m'},$$
where $m' = m(e-1) +1$, with $e$ being the ramification degree of $L$ over $\Q_p$.
Consider the members of the family with weight  $k_m \geq 2$ which is the smallest integer such that 
$k+k_m\equiv2\mod p^{m-1}(p-1)$. The previous identity then gives:
\begin{equation}\label{eq:cong}
\langle\lambda(\mathfrak{a})\rangle^{k_m-(2-k)}\equiv 1 \mod\mathfrak{P}^{m'}.
\end{equation}
Since $G(w)=\sum_n\Big(\sum_{\mathfrak{a}=n}\bar{\psi}^{-1}(\mathfrak{a})\langle\lambda(\mathfrak{a})\rangle^{w-(2-k)}\Big)q^n$, 
the $r$-th coefficient of $G(w)$ (for $r$ a rational prime  is):
$$a_r(G(w))=\bar{\psi}^{-1}(\mathfrak{r})\langle\lambda(\mathfrak{r})\rangle^{w-(2-k)}+\bar{\psi}^{-1}(\bar{\mathfrak{r}})\langle\lambda(\bar{\mathfrak{r}})\rangle^{w-(2-k)}.$$
The identity $\mathfrak{r}\bar{\mathfrak{r}}=r$ along with  \eqref{eq:cong} then shows that 
$$a_q(G(k_m))\equiv a_q(h_{2-k})\mod\mathfrak{P}^{m'}.$$
For the primes $r$ that are inert in $K$ the above equivalence is trivially true since in this case $a_r(G(k_m))=0=a_r(h_{2-k})$.
We thus get that for all primes $r$ away from $Np$ the following holds:
$$a_r(G(k_m))\equiv a_r(h_{2-k})\mod\mathfrak{P}^{m'}.$$
As we mentioned before, all the members of $G$ with weight $w\geq2$ are classical forms so every $h_{m}:=G(k_m)$ is classical. Finally the last identity implies that $h_{m}$ is congruent to $h_{2-k}$ modulo $\mathfrak{P}^{m'}$ almost everywhere, as required. \end{proof}
Note that the Fourier coefficient version of Definition 2.1 was used to show companionship in the above proof. As noted in Remark 4.2 following the definition, this formulation can be reconciled with the Galois representation formulation if one knows that $\rb_f$ is absolutely irreducible. For $f$ as in the theorem above, $\rb_f$ is irreducible because it has projectively dihedral image. Absolute irreducibility then follows because $\rb_f$ is odd and $p \geq 3$.

As an immediate application, we use  Theorem 4.1 to in fact prove its converse; therefore giving a complete arithmetic characterization of $p$-ordinary CM forms.
\begin{thm}\label{thm:main-converse}
If  $f$ is a $p$-ordinary cuspidal eigenform such that for every $m\geq 1$ there exists a CM cuspidal eigenform $h_m$ which is a companion of $f$ modulo $p^m$, then $f$ has CM.
\end{thm}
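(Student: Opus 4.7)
The plan is to exploit the fact that each $h_m$, having CM, admits a non-trivial odd quadratic character $\phi_m$ attached to its imaginary quadratic field of multiplication; equivalently $\rho_{h_m}\simeq \rho_{h_m}\otimes\phi_m$, and reducing mod $p^m$ gives $\rho_{h_m,m}\simeq\rho_{h_m,m}\otimes\phi_m$. Combining this with the companion relation $\rho_{f,m}\simeq\rho_{h_m,m}\otimes\chi^{k-1}$ yields
$$
\rho_{f,m}\otimes\phi_m\simeq \rho_{f,m}\pmod{p^m},
$$
which at the level of Frobenius traces reads $a_q(f)(\phi_m(q)-1)\equiv 0\pmod{p^m}$ for every prime $q$ away from $Np$ and the conductor of $\phi_m$. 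In this sense, $f$ ``has CM by $\phi_m$ modulo $p^m$''.

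The crux is to show that as $m$ varies the characters $\phi_m$ lie in a finite set. For this I would restrict the displayed isomorphism to the inertia group $I_\ell$ at an arbitrary prime $\ell\nmid Np$. Since $f$ has level $N$ prime to $p$, $\rho_{f,m}|_{I_\ell}$ is trivial, so $(\rho_{f,m}\otimes\phi_m)|_{I_\ell}$ acts as the scalar $\phi_m|_{I_\ell}\cdot I_2$. The isomorphism forces $\phi_m\equiv 1\pmod{p^m}$ on $I_\ell$, and since $\phi_m$ takes values in $\{\pm 1\}$ and $p$ is odd, $\phi_m$ is actually unramified at $\ell$. Hence each $\phi_m$ is a quadratic Dirichlet character whose conductor divides $Np$ (the $p$-part being at most $p$), and such characters form a finite set.

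Passing to an infinite subset on which $\phi_m$ equals a fixed character $\phi$, the trace congruence holds for arbitrarily high powers of $p$, so $a_q(f)(\phi(q)-1)=0$ identically for $q\nmid Np$. Since $p\geq 3$, the factor $\phi(q)-1\in\{0,-2\}$ is either zero or a unit in any ring in which the $a_q(f)$ live, so $\phi(q)=-1$ forces $a_q(f)=0$, which is precisely the condition for $f$ to have CM by $\phi$. Non-triviality and oddness of $\phi$ are inherited from any $\phi_m$ in the subsequence, so $\phi$ cuts out an imaginary quadratic field.

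The main obstacle is the ramification bound that confines the $\phi_m$ to a finite set; once that finiteness is secured, the limit argument and the extraction of the CM character are routine.
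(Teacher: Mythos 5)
Your argument is correct in substance, but it takes a genuinely different route from the paper's. The paper proves this converse as an application of Theorem \ref{thm:straight}: from the companionship congruences it first argues that the CM characters of the $h_m$ all coincide with a single quadratic character $\epsilon$ (being $\{\pm1\}$-valued and congruent modulo powers of $p\geq 3$), then applies Theorem \ref{thm:straight} to each $h_m$ to produce CM forms $f_m$ with $f_m\equiv f \bmod p^m$ away from $p$, all with CM by $\epsilon$, and passes to the limit in the coefficient congruences $a_\ell(f)=\epsilon(\ell)a_\ell(f)$, finally adjusting the character at $p$. You never invoke Theorem \ref{thm:straight}: instead you exploit the self-twist $\rho_{h_m,m}\simeq\rho_{h_m,m}\otimes\phi_m$ together with companionship to get $\rho_{f,m}\simeq\rho_{f,m}\otimes\phi_m$, use inertia at primes $\ell\nmid Np$ (where $\rho_{f,m}$ is unramified, $\phi_m$ is $\pm1$-valued and $p$ is odd) to force $\phi_m$ to be unramified outside $Np$, and then finiteness plus pigeonhole and the congruences to arbitrarily high depth produce one fixed $\phi$ with $a_q(f)(\phi(q)-1)=0$ for all $q\nmid Np$. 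Your route buys independence from the main theorem (no Hida family or Coleman input is needed, so the converse stands on its own) and it does not require showing that the CM fields of the $h_m$ stabilize---finiteness and pigeonhole suffice; the paper's route buys the sharper facts that all the CM characters literally coincide and that $f$ is congruent to genuine CM forms to arbitrary depth, and, by staying at the level of $q$-expansions, it sidesteps the lattice issue that your step ``reduce $\rho_{h_m}\simeq\rho_{h_m}\otimes\phi_m$ modulo $p^m$'' silently raises: reductions mod $p^m$ are only lattice-independent when $\overline{\rho}_{h_m}$ is absolutely irreducible (cf. the remark following Definition \ref{compdef}). This is harmless here---for an induced representation the self-twist is realized integrally in the induced model by conjugation by $\mathrm{diag}(1,-1)$, and the trace congruence $a_q(f)(\phi_m(q)-1)\equiv 0$ can even be read off directly from the Fourier-coefficient form of companionship---but it deserves a sentence. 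Two cosmetic points: the conductor bound should say ``supported on the primes dividing $Np$'' rather than ``divides $Np$'' (the $2$-part can be $4$ or $8$ when $2\mid N$), which does not affect finiteness; and your $\phi$ may be ramified or nontrivial at $p$, which is fine since the definition of CM only concerns almost all primes, whereas the paper normalizes its character to be trivial at $p$.
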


\begin{proof}
Assume that $f$ has CM companions $h_m$ for all $m \geq 1$ and assume that $h_m$ is CM with respect to a non-trivial quadratic character $\epsilon_m:(\Z/D_m\Z)^\times\longrightarrow \{\pm1\}\subset\C^\times$. The companionship property between $f$ and each of the $h_m$'s enforces the following compatibility congruences:
$$h_{m_1}\equiv'h_{m_2}\mod p^{m_1}\qquad\textrm{for all }m_2\geq m_1$$
$$\e_{m_1}\equiv'\e_{m_2}\mod p^{m_1}\qquad\textrm{for all }m_2\geq m_1$$
where $\equiv'$ means ``away from $p$''. The second compatibility congruence, combined with the fact that the characters $\e_m$ are valued in $\pm1$ and $p\geq3$, implies that $\e_m=\e$ for all $m\geq1$. In particular $\e$ is also a non-trivial quadratic character with conductor $D = D_m$. Furthermore, by Theorem \ref{thm:straight}  each of the $h_m$'s has companions everywhere and, in particular,  there exist CM forms $f_m$, such that:
$$f_m\equiv'f\mod p^m\qquad\textrm{for all }m.$$
Each of the $f_m$'s has CM \textit{w.r.t} the same character $\e$ . This, combined with the previous congruence implies that:
$$a_\ell(f)=a_\ell(f)\e(\ell)\qquad\textrm{for all }(\ell,Dp)=1.$$
If $p|D$, then it is clear that $f$ has CM by $\e$ as well. If $(D,p)=1$, then let $\e':(\Z/Dp\Z)^\times\longrightarrow\C^\times$ be the quadratic character that is trivial on $(\Z/p\Z)^\times$ and $\e$ on $(\Z/D\Z)^\times$. Clearly, $f$ has CM by $\e'$. \end{proof}



\section{An elementary approach}
In the section we present a proof of Theorem 4.1 which relies purely on  carefully manipulating the Hecke character from which the CM form $f$ arises and then appealing to Theorem 3.2. The weakness of this approach, which we have been unable to overcome, is a condition on the class number of the imaginary quadratic field $K$ from which $f$ arises. Nevertheless, this method allows us to prove that $f$, in fact, has companion forms modulo an odd integer $M \geq 3$  --  not necessarily a prime power. Let  $k'\geq 2$ be the smallest integer  such that $k+ k'\equiv2\mod \phi(M)$, with $\phi$ being the Euler totient function. We say that $f$ has a companion $h$ modulo $M$ of weight $k'$ if, keeping the terminology of Definition 2.1, $a_n(f) \equiv n^{k-1}a_n(h) \mod{M}$ for  $(n,NM) = 1$.

\begin{thm}\label{thm:elementary}
Let  $M \geq 3$ be an odd integer  and $f = \sum a_n(f)q^n$  be a  CM eigenform of weight $k \geq 2$, level $N$ coprime to $M$ and $p$-ordinary for each $p|M$.  Assume that the class number of the imaginary quadratic field $K$ from which  $f$ arises is coprime to $M$. Then, there exists a $p$-ordinary CM eigenform $h = \sum a_n(h)q^n$ of weight  $k'$ that is  a companion form for $f$ mod $M$. \end{thm}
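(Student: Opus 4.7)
The plan is to mimic the coefficient-side calculation from the proof of Theorem 4.1, but working directly with Hecke characters in place of the Hida-family machinery. Let $\psi$ be the Hecke character of $K$ attached to $f$ (adjusted at each $p\mid M$ to the $p$-old form, exactly as in the proof of Theorem 4.1), with infinity type $\sigma^{k-1}$ and conductor $\mathfrak{m}$ satisfying $(\mathfrak{m},M)=1$. I would construct a Hecke character $\psi'$ of $K$ of infinity type $\sigma^{k'-1}$ and modulus dividing $M\mathfrak{m}$ such that
\begin{equation*}
\psi'(\mathfrak{a})\;\equiv\;\psi(\mathfrak{a})\,\Norm(\mathfrak{a})^{k'-1}\pmod{M}
\end{equation*}
for every $\mathfrak{a}\in J^{M\mathfrak{m}}$. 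Theorem 3.1 then produces a weight-$k'$ CM eigenform $h$ from $\psi'$. The companionship congruence is then immediate: for split primes $r=\mathfrak{r}\bar{\mathfrak{r}}$ coprime to $NM$ we obtain $a_r(h)=\psi'(\mathfrak{r})+\psi'(\bar{\mathfrak{r}})\equiv r^{k'-1}a_r(f)\pmod{M}$, and since $k+k'\equiv 2\pmod{\phi(M)}$ forces $r^{k+k'-2}\equiv 1\pmod{M}$ by Euler's theorem, this rearranges to $a_r(f)\equiv r^{k-1}a_r(h)\pmod{M}$; for inert primes both sides are $0$. Ordinariness of $h$ at each $p\mid M$ follows from that of $\psi'$ at the split prime above $p$, inherited from $\psi$.

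Next I would construct $\psi'$ in two stages. On the subgroup $P^{M\mathfrak{m}}$ of principal ideals $(\alpha)$ with $\alpha\equiv 1\pmod{M\mathfrak{m}}$, define $\psi'((\alpha)):=\sigma(\alpha)^{k'-1}$. Well-definedness requires $\sigma(u)^{k'-1}=1$ for units $u\equiv 1\pmod{M\mathfrak{m}}$; since $M\geq 3$ is odd, the only such unit is $u=1$, even for $K=\Q(i)$ or $\Q(\zeta_3)$. A short computation—using $\sigma(\alpha)\equiv\bar{\sigma}(\alpha)\equiv 1\pmod{M}$ together with $\phi(M)\mid k+k'-2$—shows that both $\sigma(\alpha)^{k'-1}$ and $\psi((\alpha))\Norm((\alpha))^{k'-1}=\sigma(\alpha)^{k+k'-2}\bar{\sigma}(\alpha)^{k'-1}$ reduce to $1$ modulo $M$, so the target congruence holds trivially on $P^{M\mathfrak{m}}$. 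I would then extend across cosets of $(\mathcal{O}_K/M\mathfrak{m})^\times$ by choosing the finite part of $\psi'$ to agree with $\psi^\infty$, propagating the congruence to every principal ideal coprime to $M\mathfrak{m}$.

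The main obstacle is extending $\psi'$ from principal to non-principal ideals while preserving the mod-$M$ congruence, and this is exactly where $(h_K,M)=1$ is forced into the argument. Given $\mathfrak{a}\in J^{M\mathfrak{m}}$, the ideal $\mathfrak{a}^{h_K}$ is principal, say $(\gamma_{\mathfrak{a}})$ with $\gamma_{\mathfrak{a}}$ coprime to $M\mathfrak{m}$, and $\psi'((\gamma_{\mathfrak{a}}))$ is determined by the first stage. The value $\psi'(\mathfrak{a})$ must be chosen as one of the $h_K$ possible $h_K$-th roots of $\psi'((\gamma_{\mathfrak{a}}))$; the candidate reduction $\psi(\mathfrak{a})\Norm(\mathfrak{a})^{k'-1}$ is itself an $h_K$-th root of the appropriate residue modulo $M$, as one checks by raising to the $h_K$-th power and using the first-stage congruence. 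Because $(h_K,M)=1$, Hensel's lemma (applied prime-by-prime to each $p^{a_p}\Vert M$, recalling that each such $p$ splits in $K$ by $p$-ordinariness) guarantees a unique $h_K$-th root in the algebraic closure whose reduction mod $M$ is the prescribed one. Making these choices coherently over a set of class-group representatives assembles a Hecke character $\psi'$ on all of $J^{M\mathfrak{m}}$ satisfying the required congruence. Without the coprimality $(h_K,M)=1$ the $h_K$-th-power map fails to be bijective modulo some $p\mid M$, and the compatibility between the $h_K$-th root of $\psi'((\gamma_{\mathfrak{a}}))$ and the target reduction cannot be enforced—this is precisely the obstruction the hypothesis removes. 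Finally, Theorem 3.1 applied to the constructed $\psi'$ furnishes the companion CM form $h$ of weight $k'$.
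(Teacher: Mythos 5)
Your overall strategy (work directly with Hecke characters, use Euler's theorem for $\phi(M)\mid k+k'-2$, and spend the hypothesis $(h_K,M)=1$ on extracting compatible roots over the class group) is the same as the paper's, and your final Hensel/class-group step is close in spirit to the paper's choice of the roots of unity $\zeta_{c_i}^{d_i}$. But there is a genuine gap at the very first stage, in the choice of the comparison character. You ask for $\psi'$ of infinity type $\sigma^{k'-1}$ with $\psi'(\mathfrak{a})\equiv\psi(\mathfrak{a})\Norm(\mathfrak{a})^{k'-1}\pmod{M}$, and you claim that taking the finite part of $\psi'$ equal to $\psi^\infty$ propagates this congruence to \emph{every} principal ideal coprime to $M\mathfrak{m}$. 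Write it out for $(\alpha)$: you need
\begin{equation*}
\psi^\infty(\alpha)\,\sigma(\alpha)^{k'-1}\;\equiv\;\psi^\infty(\alpha)\,\sigma(\alpha)^{k-1}\bigl(\sigma(\alpha)\bar{\sigma}(\alpha)\bigr)^{k'-1},
\qquad\text{i.e.}\qquad
\sigma(\alpha)^{k-1}\bar{\sigma}(\alpha)^{k'-1}\equiv 1
\end{equation*}
modulo the chosen prime $\mathfrak{P}$ above $p\mid M$. Since $p$ splits, $\mathcal{O}_K/p^{t_p}\cong \Z/p^{t_p}\times\Z/p^{t_p}$ via $(\mathfrak{p},\bar{\mathfrak{p}})$, and $\sigma(\alpha),\bar{\sigma}(\alpha)$ reduce to the two \emph{independent} components $u,v$. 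So the obstruction is $u^{k-1}v^{k'-1}\equiv (u/v)^{k-1}v^{k+k'-2}\equiv(u/v)^{k-1}$: Euler's theorem only kills the \emph{sum} of the exponents (equivalently, powers of the norm or of a single embedding), not the mixed type of $\psi\cdot\Norm^{k'-1}$, whose infinity type is $\sigma^{k+k'-2}\bar{\sigma}^{k'-1}$. Choosing $\alpha\equiv g\pmod{\mathfrak{p}}$, $\alpha\equiv1\pmod{\bar{\mathfrak{p}}\mathfrak{m}}$ with $g$ a generator shows the congruence fails already mod $p$ unless $\phi(p^{t_p})\mid k-1$; and when $p^2\mid M$ no adjustment of the finite type can repair the termwise congruence as you posed it, since no finite-order character is congruent mod $p^{t_p}$ to $\alpha\mapsto(\sigma(\alpha)/\bar{\sigma}(\alpha))^{k-1}$. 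Your Stage-1 computation on $\alpha\equiv1\pmod{M\mathfrak{m}}$ is correct but vacuous (both sides are $1$ there); the failure occurs exactly at the extension step you assert without proof.

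This is precisely the point where the paper's setup differs: it compares the sought character not with $\psi\cdot\Norm^{k'-1}$ but with the weight-$(2-k)$ character $\varphi=\bar{\psi}_g^{-1}$ (equivalently $\psi_g\cdot\Norm^{1-k}$), whose infinity type is a \emph{pure} power of one embedding and whose finite type is again $\psi_g^\infty$. Then the discrepancy on principal ideals is a single power $\alpha\mapsto\alpha^{k+k'-2}$ under one embedding, which Euler's theorem does kill, and only the class-group ambiguity remains, resolved using $(h_K,M)=1$ exactly as in your last paragraph. Your argument can be repaired by the same change of target: demand $\psi'(\mathfrak{a})\equiv\overline{\psi(\mathfrak{a})}^{-1}=\psi(\mathfrak{a})\Norm(\mathfrak{a})^{1-k}$ (equivalently, match $\psi'(\mathfrak{r})$ against $r^{k'-1}\psi(\bar{\mathfrak{r}})$ rather than $r^{k'-1}\psi(\mathfrak{r})$ — the symmetric sum $a_r$ does not care which), after which your Euler and Hensel steps go through and reproduce the paper's proof. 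As written, however, the construction of $\psi'$ does not exist, so the proposal has a genuine gap.
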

\begin{proof}

Let $K=\Q(\sqrt{-D})$ be the quadratic imaginary field by which $f$ has $CM$ and $M' = \prod_{i=1}^r p_i$ for primes $p_i$ dividing $M$.  Let $g$ be the  eigenform of level $NM'$ and weight $k$, whose $p_i$-th coefficient has $p_i$-adic valuation equal to $k-1$ for each $p_i|M'$ and whose $q$-expansion agrees with that of $f$ at all primes away from $NM'$. Such a $g$ exists and one can construct it iteratively as follows. Let $g_1$ be the $p_1$-twin oldform of weight $k$ and level $Np_1$. Now $g_1$  is still $p_2$-ordinary so that one can associate to it the $p_2$-twin eigenform of weight $k$ and level $Np_1p_2$. Proceeding iteratively in this manner we have the desired eigenform $g =g_r$ of weight $k$, and level $NM'$. Clearly $g$ has CM by $K$ as well. Let $\psi_g:J^{\mathfrak{m}}\to \C^*$ be the Hecke character over $K$ associated with $g$. We will denote by $\psi_g^{\infty}$ its finite type and by $\psi_{g,\infty}$ its infinity type. Let $\sigma:K\to \C$ be the identity embedding of $K$ in $\C$. Then $\psi_{g,\infty}=\sigma^{k-1}$. Theorem \ref{thm:Miyake} implies that $\Norm\mathfrak{m}| N$. In particular $m$ is coprime to $\mathfrak{m}$, the modulus of $\psi_g$.

Consider the Hecke character $\varphi=\bar{\psi_g}^{-1}$. We have that $\varphi^{\infty}=\psi_g^{\infty}$ and $\varphi_\infty=\sigma^{1-k}=\sigma^{(2-k)-1}$. The reason we are interested in this character is the identity
\begin{equation}\label{key_id}s^{k-1}(\varphi(\mathfrak{s})+ \varphi(\bar{\mathfrak{s}})) = \psi_g(\mathfrak{s}) + \psi_g(\bar{\mathfrak{s}}) = a_s(g),\end{equation}
for every rational prime $s$ that splits in $K$ as $\mathfrak{s}\bar{\mathfrak{s}}$. In view of this identity it is enough to find a Hecke character $\psi'$ congruent to $\varphi$ and with infinity type $\sigma^{k'-1}$.

Assume for simplicity that $D\neq 1,3$. It is then easy to see (for instance the discussion in Section 4 of \cite{Tsaknias}) that the compatibility of a finite and an infinity type is guaranteed by the identity $\psi_g^{\infty}(-1)=(-1)^{k-1}$. One can therefore find a Hecke character with finite type equal to $\psi_g^{\infty}$ and infinity type $\sigma^{k'-1}$, for any $k'\equiv 2-k $ mod $2$. This Hecke character is not unique. In fact it turns out (see for example Lemma 4.1 in \cite{Tsaknias}) that there are $h_K$ choices, where $h_K$ is the class number of $K$. We explain this in more detail.  Consider the group $J^{\mathfrak{m}}/P^{\mathfrak{m}}$ where $J^{\mathfrak{m}}$ is as in Section 2 and $P^{\mathfrak{m}}$ is the subgroup of principal fractional ideals coprime to $\mathfrak{m}$. It is a finite abelian group of order $h_K$. Let $\mathfrak{a}_i$ denote a fixed choice of representatives that generate $J^{\mathfrak{m}}/P^{\mathfrak{m}}$, $c_i$ be their respective orders and  $\alpha_i$ be the generator of the ideal $\mathfrak{a}_i^{c_i}$. Then, given a pair $(\psi^{\infty}, \sigma^{k'-1})$ consisting of a finite and an infinity type, any Hecke character $\psi'$ corresponding to that pair is completely determined by the values $\psi'(\mathfrak{a}_i)$. Let  $d_i(\psi')$ be a non-negative integer and $\zeta_{c_i}$ denote a primitive $c_i$-th root of unity. The only possible values for $\psi'(\mathfrak{a}_i)$ are of the form
$$\sqrt[c_i]{\psi^{\infty}(\alpha_i)}\sqrt[c_i]{\alpha_i^{k'-1}}\zeta^{d_i(\psi')}_{c_i}$$
where $\sqrt[c_i]{\psi^{\infty}(\alpha_i)}$ and $\sqrt[c_i]{\alpha_i^{k'-1}}$ are fixed choices of $c_i$-th roots of $\psi^{\infty}(\alpha_i)$ and 
$\alpha_i^{k'-1}$ respectively.

Let $k'$ be the integer that we defined previously. Since $k'$ has the same parity as $2-k$  we can define a Hecke character $\psi'$ of infinity type $\sigma^{k'-1}$ and finite type $\psi_g^{\infty}$. Let $L$ be the extension of $K$ with $\sqrt[c_i]{\psi_g^{\infty}(\alpha_i)}$, $\sqrt[c_i]{\alpha_i^{k'-1}}$ and $\zeta_{c_i}$ adjoined for all $i$ (and for the fixed $k'$). Let $M=\prod_pp^{t_p}$. Furthermore, for every $p|M$, let $\mathfrak{p}$ be a prime in $K$ such that $p=\mathfrak{p}\bar{\mathfrak{p}}$ and pick a prime $\mathfrak{P}$ above 
$\mathfrak{p}$ in $L$. 
Our goal is to show that the $d_i(\psi')$'s can always be chosen so that
$$\varphi(\mathfrak{a}_i)\equiv\psi'(\mathfrak{a}_i)\pmod{\mathfrak{P}^{t'_\mathfrak{P}}}$$
for all $i$, where $t'_\mathfrak{P}=e(\mathfrak{P}/\mathfrak{p})(t_p-1)+1$. First, notice that if $\alpha_i\equiv 0 \pmod{\mathfrak{p}}$ then the desired congruence is immediate. In what follows we can therefore freely assume that $\alpha_i$ is a unit in $\mathcal{O}_K/\mathfrak{p}^{t_p}\subseteq \mathcal{O}_L/\mathfrak{P}^{t'_\mathfrak{P}}$. Moreover,       
$\mathcal{O}_K/\mathfrak{p}^{t_p}$ is isomorphic to $\Z/p^{t_p}\Z$ because of the splitting condition on the primes dividing $M$.
Since the order of $(\Z/p^{t_p}\Z)^*$ is $\phi(p^{t_p})$, we get that $\alpha^{k'-(k-2)}\equiv1\pmod{\mathfrak{p}^{t_p}}$.
This in turn implies that $\sqrt[c_i]{\alpha^{k'-(2-k)}}$ is a $c_i$-th root of unity in $\mathcal{O}_L/\mathfrak{P}^{t'_{\mathfrak{P}}}$. Since $h_K$ is coprime to $M$ this $c_i$-th root of unity will lift to one in $\mathcal{O}_L$ so all one has to do is to choose $d_i(\psi')$ so that $\zeta_{c_i}^{d_i(\varphi)-d_i(\psi')}$ is this root. In conclusion, the CM form $h$ that one may associate to the Hecke character $\psi'$ by Theorem 3.2 is the desired companion of $f$ mod $M$. Equation (5.1) ensures that $h$  is $p$-ordinary for each $p$ dividing $M$. \end{proof}

\begin{example} Let $f$ be the CM newform of weight $3$ and level $8$ with the following Fourier expansion,

\begin{center}
$q - 2q^2 - 2q^3 + 4q^4 + 4q^6 - 8q^8 - 5q^9 + 14q^{11} - 8q^{12} + 16q^{16} + 2q^{17} + 10q^{18}- 34q^{19} - 28q^{22} + 16q^{24} + 25q^{25} +  \cdots $ \end{center}

It is ordinary at $3$, $11$ and $17$. Using MAGMA \cite{Magma} we find companions $h_{19}$, $h_{31}$ and $h_{59}$ modulo $33$, $51$ and $99$ respectively. The indices denote the weights of the companions; each has level 8 and CM by the quadratic Dirichlet character of conductor $8$. Their Fourier expansions are:


 \begin{center} $h_{19} =q - 512q^2 - 3266q^3 + 262144q^4 + 1672192q^6 - 134217728q^8 - 376753733q^9 - 354349618q^{11} - 856162304q^{12} + 68719476736q^{16} + 119842447106q^{17} + 192897911296q^{18} + 
    335013705758q^{19} + 181427004416q^{22} + 438355099648q^{24} + 3814697265625q^{25} +\cdots $ \end{center}
    
\begin{center}  $h_{31} = q - 32768q^2 - 26595314q^3 + 1073741824q^4 +  871475249152q^6  -$
$35184372088832q^8 +     501419594663947q^9 +
 6656187998706302q^{11} - 28556500964212736q^{12} + 1152921504606846976q^{16}
 - 4422784932886529086q^{17}- 16430517277948215296q^{18} - 23964789267887608402q^{19}
- 218109968341608103936q^{22} + 935739423595322933248q^{24} + 
    931322574615478515625q^{25} - \cdots$  \end{center}
    
 \begin{center}  $h_{59} =  q - 536870912q^2 + 57281430144478q^3 + 288230376151711744q^4 - 
    30752733642330195623936q^6 - 154742504910672534362390528q^8 - 
    1428966457849531926967711205q^9 + 2900908653579886134108518505134q^{11} + 
    16510248157050893929760929349632q^{12} + 
    83076749736557242056487941267521536q^{16} + 
    955027058519269179716584293727217282q^{17} + 
    767170525443087764404272509180968960q^{18} - 
    15840463221028561793718151601779174594q^{19} - 
    1557413474476125533674994636820167262208q^{22 }- 
    8863871985422232654486014081904498704384q^{24} + 
    34694469519536141888238489627838134765625q^{25} - \cdots$  \end{center}

\end{example}

\section*{Acknowledgments}
The first author thanks DFG GRK 1692 at Regensburg, the Hausdorff Institute, Bonn and  the AFR Grant Scheme of FNR Luxembourg for postdoctoral support during the course of this work. The second author acknowledges the support of the DFG Priority Program SPP 1489.

\end{document}